\setlist[enumerate]{label=\rm{(\arabic*)}}
\setlist[enumerate,2]{label=\rm{(\roman*)}}
\setlist[itemize]{label=\raisebox{0.25ex}{\tiny$\bullet$}}
\theoremstyle{plain}
\newtheorem{theorem}{Theorem}[section]
\newtheorem{corollary}[theorem]{Corollary}
\newtheorem{proposition}[theorem]{Proposition}
\newtheorem{lemma}[theorem]{Lemma}
\newtheorem{example}[theorem]{Example}
\newtheorem*{acknowledgments}{Acknowledgments}
\theoremstyle{definition}
\newtheorem{definition}[theorem]{Definition}
\newtheorem{remark}[theorem]{Remark}
\theoremstyle{plain}
\newtheorem{theoremA}{Theorem}
\newcommand{\rmap}{\dashrightarrow}
\newcommand{\psmap}{\smash{\xymatrix@C=0.5cm@M=1.5pt{ \ar@{..>}[r]& }}}
\def\NE{\overline{\operatorname{NE}}}
\def\Autzero{\mathrm{Aut}^\circ}
\def\Aut{\mathrm{Aut}}
\def\Bir{\mathrm{Bir}}
\def\PP{\mathbb{P}}
\def\FF{\mathbb{F}}
\def\PGL{\mathrm{PGL}}
\def\kk{\mathbf{k}}
\def\seg{\mathfrak{S}}
\begin{document}

\title[Connected algebraic subgroups of $\Bir(X)$ not contained in a maximal one]{Connected algebraic subgroups of groups of birational transformations not contained in a maximal one}
\author{Pascal Fong \and Sokratis Zikas}
\address{Universit\"at Basel, Departement Mathematik und Informatik, Spiegelgasse 1, CH--4051 Basel, Switzerland}
\email{pascal.fong@unibas.ch}
\address{Universit\"at Basel, Departement Mathematik und Informatik, Spiegelgasse 1, CH--4051 Basel, Switzerland}
\email{sokratis.zikas@unibas.ch}

\begin{abstract}
	We prove that for each $n\geq 2$, there exist a ruled variety $X$ of dimension $n$ and a connected algebraic subgroup of $\Bir(X)$ which is not contained in a maximal one.
\end{abstract}

\maketitle

\section{Introduction}

Let $\kk$ be an algebraically closed field. The classification of algebraic subgroups of groups of birational transformations was initiated in \cite{Enriques}, where Enriques shows that each connected algebraic subgroup of $\Bir(\PP^2)$ is conjugate to an algebraic subgroup of $\Autzero(S)$, with $S$ isomorphic to $\PP^2$ or to the $n$-th Hirzebruch surface $\FF_n$ for $n\neq 1$; and these are all maximal, with respect to the inclusion, among the connected algebraic subgroups of $\Bir(\PP^2)$. The connected algebraic subgroups of $\Bir(\PP^3)$ have been classified over $\kk=\mathbb{C}$ by Umemura in a series of four papers \cite{Umemura80,Umemura82a,Umemura82b,Umemura85} and it follows again from his classification that each connected algebraic subgroup of $\Bir(\PP^3)$ is contained in a maximal one (see also \cite{BFT1, BFT2} for a modern approach). However, it is an open problem whether every connected algebraic subgroup of $\Bir(\PP^n)$ is contained in a maximal one when $n\geq 4$. 

On the other hand, it is proven in \cite[Theorem C]{fong} that there exist connected algebraic subgroups of $\Bir(C\times \PP^1)$ not contained in a maximal one when $C$ is a smooth curve of positive genus. The proof of this result is based on the existence of infinite increasing sequences of connected algebraic subgroups of $\Bir(C\times \PP^1)$ (see \cite[Theorem A]{fong}), and on the fact that the dimension of a maximal connected algebraic subgroup of $\Bir(C\times \PP^1)$ is bounded by $4$ (see \cite[Theorem B]{fong} and \cite[Theorem 3]{Maruyama}). Our main result in this note is a higher dimensional analogue of \cite[Theorem C]{fong}:

\begin{theoremA}\label{Theorem}
	Let $\kk$ be an algebraically closed field of characteristic $0$. Let $n\geq 1$ and $C$ be a smooth curve of positive genus. 
	Then there exists a connected algebraic subgroup of  $\Bir(C\times \PP^n)$  which is not contained in a maximal one.
\end{theoremA}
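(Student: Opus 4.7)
The case $n=1$ is \cite[Theorem C]{fong}, so the plan is to reduce the case $n\ge 2$ to this known case via a birational identification. Since $C\times\PP^n$ and $(C\times\PP^1)\times\PP^{n-1}$ are birational, a choice of birational equivalence $\varphi$ between them, combined with the product embedding $g\mapsto g\times\operatorname{id}_{\PP^{n-1}}$, yields an injective group homomorphism
\[
\iota \colon \Bir(C\times\PP^1) \hookrightarrow \Bir(C\times\PP^n)
\]
which sends connected algebraic subgroups to connected algebraic subgroups of the same dimension.

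The strategy is then to mimic the dimension-counting argument behind \cite[Theorem C]{fong} one dimension up. By \cite[Theorem A]{fong}, there is an infinite strictly increasing chain $G_1\subsetneq G_2\subsetneq\cdots$ of connected algebraic subgroups of $\Bir(C\times\PP^1)$. Since each $G_i$ is a proper closed connected subgroup of $G_{i+1}$, the dimensions $\dim G_i$ are strictly increasing, hence tend to infinity. Setting $\widetilde G_i:=\iota(G_i)$ produces a strictly increasing chain of connected algebraic subgroups of $\Bir(C\times\PP^n)$ of the same, unbounded dimensions. Provided one has an upper bound $N=N(n,g)$ on the dimension of any maximal connected algebraic subgroup of $\Bir(C\times\PP^n)$, depending only on $n$ and the genus $g$ of $C$, I conclude as follows: for any $i$ with $\dim\widetilde G_i>N$, any connected algebraic subgroup of $\Bir(C\times\PP^n)$ containing $\widetilde G_i$ has dimension exceeding $N$ and is therefore not maximal; in particular such a $\widetilde G_i$ is not contained in any maximal connected algebraic subgroup.

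The main obstacle is establishing the dimension bound $N(n,g)$, the higher-dimensional analogue of \cite[Theorem B]{fong}. Since $g>0$, every connected algebraic subgroup of $\Bir(C\times\PP^n)$ must preserve the projection $C\times\PP^n\to C$ coming from the Albanese map, so it lies in $\PGL_{n+1}(\kk(C))\rtimes\Autzero(C)$. I expect the argument to proceed by regularising a given maximal connected algebraic subgroup to the identity component of the automorphism group of a suitable $\PP^n$-bundle $\PP(E)\to C$ and controlling $\dim\Autzero(\PP(E))$ in terms of $n$ and $g$ when $\PP(E)$ is ``minimal'' in an appropriate sense, in direct parallel with the ruled-surface analysis of \cite[Theorem B]{fong} and \cite[Theorem 3]{Maruyama}.
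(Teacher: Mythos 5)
Your reduction of the problem to a single missing ingredient is logically sound, but that ingredient --- an upper bound $N(n,g)$ on the dimension of every maximal connected algebraic subgroup of $\Bir(C\times\PP^n)$ --- is precisely the hard part, and it is neither proved in your sketch nor available in the literature for $n\geq 2$. For $n=1$ this bound is \cite[Theorem B]{fong} together with \cite[Theorem 3]{Maruyama}, and its proof already requires a full classification of the maximal connected algebraic subgroups of $\Bir(C\times\PP^1)$, i.e.\ of the relevant minimal ruled surfaces and their automorphism groups. In higher dimension, a maximal connected subgroup regularizes to $\Autzero$ of some Mori fibre space birational to $C\times\PP^n$, and these need not be $\PP^n$-bundles over $C$: the base of the fibration can be a surface birational to $C\times\PP^{n-1}$, the fibres can be lower-dimensional, the total space can be singular, etc. Controlling $\dim\Autzero$ of all such models ``minimal in an appropriate sense'' is essentially the classification problem that the paper explicitly declares out of reach; note also that there is no cheap bound via $\PGL_{n+1}(\kk(C))\rtimes\Autzero(C)$, since already for $n=1$ the groups $\Autzero(S)$ of (non-minimal-in-the-relevant-sense) ruled surfaces have unbounded dimension, and only the classification shows that the \emph{maximal} ones do not. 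So as written your argument has a genuine gap at its central step. (You do correctly anticipate the pitfall that an unbounded increasing chain alone proves nothing --- the paper's Remark~\ref{infiniteseq} gives an unbounded chain in $\Bir(\PP^2)$ in which every term lies in a maximal subgroup.)

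The paper avoids the dimension bound entirely. It fixes a specific group, $\Autzero(S\times\PP^n)$ for $S$ a decomposable ruled surface with $\seg(S)<-(1+\deg(K_C))$, and supposes it is contained in a maximal connected subgroup $G$. Regularizing $G$ and running a $G$-equivariant MMP produces a Mori fibre space, and the $G$-Sarkisov program of Hacon--McKernan and Floris decomposes the induced equivariant birational map into links. Lemmas~\ref{SxPP^n} and \ref{S'xPP^n} show that every link starting from these models is of the form $\psi\times id_{\PP^n}$ for a two-dimensional equivariant map $\psi$, so the final model is $S'\times\PP^n$ with $S'$ a ruled surface of Segre invariant $\leq\seg(S)$; one further elementary transformation on the minimal section then strictly enlarges the group (Lemmas~\ref{lemmadim2} and \ref{lemma2dim2}), contradicting maximality. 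If you want to salvage your approach you would have to prove the bound $N(n,g)$, which would be a substantial new theorem; otherwise you should adopt an argument of the paper's type that tracks one explicit group through all possible equivariant birational modifications.
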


The idea of the proof is to consider the connected algebraic subgroup $\Autzero(S\times \PP^n)$, where $S$ is a ruled surface such that $\Autzero(S)$ is not contained in a maximal connected algebraic subgroup of $\Bir(S)$, and to show that it cannot be contained in a maximal connected algebraic subgroup of $\Bir(S\times \PP^{n})$. Since $\Autzero(S\times \PP^n)\simeq \Autzero(S)\times \PGL_{n+1}(\kk)$ by \cite[Corollary 4.2.7]{BSU}, the existence of infinite increasing sequences of connected algebraic subgroups of $\Bir(C\times \PP^{n+1})$ is an immediate consequence of \cite[Theorem A]{fong}. From this alone, it is nonetheless insufficient to deduce that one of the connected algebraic subgroups of $\Bir(C\times \PP^{n+1})$ appearing in the infinite increasing sequences is not contained in a maximal one (see Remark \ref{infiniteseq}), and classifying all connected algebraic subgroups of $\Bir(C\times \PP^{n+1})$ seems out of reach at the moment.

This article is organized as follows. Section $2$ contains two results, namely Lemmas \ref{lemmadim2} and \ref{lemma2dim2}, which are important for the proof of the higher dimensional case. As a consequence of these two lemmas, we also get a new and short proof of the dimension two case (see Proposition \ref{dim2}), without using the classification of the maximal connected algebraic subgroups of $\Bir(C\times \PP^1)$ (\cite[Theorem B]{fong}). In Section $3$, we prove the higher dimensional case under the extra assumption that $\mathrm{char}(\kk)=0$, in view of using the machinery of the MMP and the $G$-Sarkisov program. The latter has been developped by Floris in \cite{Floris}, building upon results of Hacon and McKernan in \cite{HM13}. More precisely, if $G$ is a connected algebraic group, then every $G$-equivariant birational map between Mori fibre spaces decomposes into $G$-Sarkisov links (see \cite[Theorem 1.2]{Floris}). We study the possible links in Lemmas \ref{SxPP^n} and \ref{S'xPP^n}. Combining Proposition \ref{dim2} and Theorem \ref{higherdim}, we get Theorem \ref{Theorem}. 

It is very natural to also ask whether for all $n\geq 2$, there exists a variety $X$ of dimension $n$ such that $\Bir(X)$ contains algebraic subgroups which are not lying in a maximal one, without the connectedness assumption. If $n=2$, the answer is also affirmative (see \cite[Lemma 3.1, Corollary B]{fong2}), and the proof is analogous to that of the connected case. Since the $G$-Sarkisov program is known only for connected algebraic groups, it is not clear if the proof presented in this article could be adapted for the non-connected case in higher dimension.

\begin{acknowledgments}
	We are thankful to Jérémy Blanc, Michel Brion, Enrica Floris, Ronan Terpereau, Susanna Zimmermann for interesting discussions and remarks. Special thanks to Enrica Floris for pointing out a mistake in the proof of a preliminary version. We are also grateful to the anonymous referees for their careful reading and useful comments. The first and second authors respectively acknowledge support by the Swiss National Science Foundation Grants “Geometrically ruled surfaces” 200020–192217 and “Birational transformations of threefolds” 200020-178807.
\end{acknowledgments}

\section{Some preliminaries and the case of dimension two}

From now on, $C$ will always denote a smooth curve of genus $g$ over a field $\kk$. In this section, $\kk$ is an algebraically closed field of arbitrary characteristic. The following invariant was used by Maruyama in \cite{Maruyamabook, Maruyama} for his classification of ruled surfaces and their automorphisms.

\begin{definition}
	Let $V$ be a rank-$2$ vector bundle over $C$ and $\tau\colon S=\PP(V)\to C$ be a ruled surface. We say that $\tau$ is \emph{decomposable} if $V$ is the direct sum of two line bundles over $C$. Otherwise, we say that $\tau$ is \emph{indecomposable}. We define the \emph{Segre invariant} of $S$ as $$\seg(S) = \min\{\sigma^2, \sigma\text{ section of } \tau\}.$$
\end{definition}

\begin{remark}\label{remarkruled}
	Let $\tau \colon S\to C$ be a ruled surface.
	\begin{enumerate}
		\item Let $p\in S$ and $\sigma$ be a section of $\tau$. Recall that the blow-up of $S$ at $p$ followed by the contraction of the strict transform of the fibre passing through $p$, yields a ruled surface $\tau'\colon S'\to C$ and a birational map $\epsilon\colon S\dashrightarrow S'$ called the \emph{elementary transformation of $S$ centered at p} (see e.g.\ \cite[V. Example 5.7.1]{Hartshorne}). Let $\sigma'$ be the strict transform of $\sigma$ by $\epsilon$. If $p\in \sigma$, then $\sigma'^2 = \sigma^2 -1$. Else, $\sigma'^2 = \sigma^2 +1$.
		\item As $S$ is obtained by finitely many elementary transformations from $C\times \PP^1$ (see e.g.\ \cite[V. Exercise 5.5]{Hartshorne}) and $\seg(C\times\PP^1)=0$ (see e.g.\ \cite[Lemma 2.14]{fong}), it follows that $\seg(S)>-\infty$. If moreover $\seg(S)<0$, then there exists a unique section with negative self-intersection number (see e.g.\ \cite[Lemma 2.10. (1)]{fong2}).
		\item \label{remarkruled.3}The Segre invariant $\seg(S)$ equals $-e$, where $e$ is the invariant defined in \cite[V. Proposition 2.8]{Hartshorne}. If $\tau$ is indecomposable, then by \cite[V. Theorem 2.12. (b)]{Hartshorne}, we get $\seg(S)\geq 2-2g=-\deg(K_C)$. In particular, if $\seg(S)< -\deg(K_C)$, then $\tau$ is decomposable. 
	\end{enumerate}
\end{remark}

We recall the statement of Blanchard's lemma and its corollary (see \cite[Proposition 4.2.1, Corollary 4.2.6]{BSU}):

\begin{proposition}\label{blanchard}
	Let $f\colon X\to Y$ be a proper morphism of schemes such that $f_*(\mathcal{O}_X) = \mathcal{O}_Y$, and let $G$ be a connected group scheme acting on $X$. Then there exists a unique action of $G$ on $Y$ such that $f$ is $G$-equivariant.
\end{proposition}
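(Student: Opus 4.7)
The plan is to construct the action $\beta\colon G\times Y\to Y$ by descending the composite $f\circ \alpha\colon G\times X\to Y$ along $\mathrm{id}_G\times f\colon G\times X\to G\times Y$, where $\alpha$ denotes the given action of $G$ on $X$. The hypotheses on $f$ are exactly what makes such a descent possible: combined with properness, the condition $f_*\mathcal{O}_X=\mathcal{O}_Y$ forces the geometric fibres of $f$ to be connected, makes $f$ surjective, and implies that any morphism out of $X$ that is constant on every fibre of $f$ factors uniquely through $Y$. These properties are stable under flat base change, so they also hold for $\mathrm{id}_G\times f\colon G\times X\to G\times Y$.

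The crux of the argument is a rigidity statement: for each $g\in G$ and $y\in Y$, the image $(f\circ\alpha)(\{g\}\times f^{-1}(y))\subseteq Y$ is a single point. For $g=e$ this is trivial, since $\alpha(e,\cdot)$ is the identity of $X$. To extend this to general $g$, fix $y$ and restrict $f\circ\alpha$ to $\psi\colon G\times f^{-1}(y)\to Y$; since $f^{-1}(y)$ is proper and connected and $\psi$ is constant on $\{e\}\times f^{-1}(y)$, a standard rigidity lemma for morphisms out of a product with a proper connected scheme forces $\psi$ to be constant on every slice $\{g\}\times f^{-1}(y)$. Combining this over all $y\in Y$, the morphism $f\circ\alpha$ is constant on every fibre of $\mathrm{id}_G\times f$, and hence descends uniquely to the desired morphism $\beta\colon G\times Y\to Y$, which is $G$-equivariant by construction.

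Uniqueness of $\beta$ is immediate from the uniqueness of the descent (or equivalently, from the fact that $\mathrm{id}_G\times f$ is an epimorphism in the relevant sense, thanks to $f_*\mathcal{O}_X=\mathcal{O}_Y$). The group-action axioms follow by the same descent principle applied one dimension higher: the identity $\alpha\circ(\mathrm{id}_G\times\alpha)=\alpha\circ(m\times\mathrm{id}_X)$, where $m\colon G\times G\to G$ is multiplication, implies that $\beta\circ(\mathrm{id}_G\times\beta)$ and $\beta\circ(m\times\mathrm{id}_Y)$ both arise as descents of the same morphism $G\times G\times X\to Y$ along $\mathrm{id}_{G\times G}\times f$, and hence coincide; the identity axiom $\beta(e,\cdot)=\mathrm{id}_Y$ is verified similarly. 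The main technical obstacle is the rigidity step, and it is precisely there that both the hypothesis $f_*\mathcal{O}_X=\mathcal{O}_Y$ (via connectedness of the fibres) and the connectedness of $G$ are genuinely needed; everything else is a formal consequence of the universal property of $f$.
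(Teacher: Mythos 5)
The paper does not prove this statement itself: it is recalled verbatim from \cite[Proposition 4.2.1]{BSU}, which is where the proof lives. Your argument --- fibrewise rigidity (using connectedness of $G$ and connectedness and properness of the fibres of $f$, the latter forced by $f_*\mathcal{O}_X=\mathcal{O}_Y$) to show $f\circ\alpha$ is constant on the fibres of $\mathrm{id}_G\times f$, then descent along $\mathrm{id}_G\times f$ to produce $\beta$, and a second descent at the level of $G\times G\times X$ to check the action axioms --- is essentially the proof given in that reference, so it is correct and matches the intended argument.
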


\begin{corollary}
	Let $f\colon X\to Y$ be a proper morphism of projective schemes such that $f_*(\mathcal{O}_X) = \mathcal{O}_Y$. Then $f$ induces a homomorphism of group schemes $f_*\colon \Autzero(X)\to \Autzero(Y)$.
\end{corollary}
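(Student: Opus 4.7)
The plan is to derive this corollary directly from Proposition \ref{blanchard}. First I would apply Blanchard's lemma to the connected group scheme $G=\Autzero(X)$ acting on $X$ by its tautological action. Since $f$ is assumed proper with $f_*(\mathcal{O}_X)=\mathcal{O}_Y$, the proposition applies verbatim and produces a unique action of $\Autzero(X)$ on $Y$, that is, a morphism of schemes $\mu\colon \Autzero(X)\times Y\to Y$, such that $f$ is $\Autzero(X)$-equivariant.

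Next I would upgrade this action into a morphism of group schemes. Since $Y$ is projective, its automorphism functor is representable by a group scheme $\Aut(Y)$ (Matsumura--Oort / Grothendieck), and giving an action $\mu$ of $\Autzero(X)$ on $Y$ is equivalent to giving a morphism of schemes $\varphi\colon \Autzero(X)\to \Aut(Y)$. To check that $\varphi$ respects the group law, I would use the uniqueness clause of Blanchard's lemma: given $g,h\in \Autzero(X)$, both $\varphi(g)\circ \varphi(h)$ and $\varphi(gh)$ are automorphisms of $Y$ making $f$ equivariant for the action of the element $gh\in \Autzero(X)$, hence they must coincide by uniqueness; the identity element is handled in the same way. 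Thus $\varphi$ is a homomorphism of group schemes. Because $\Autzero(X)$ is connected and $\varphi$ sends the identity of $X$ to the identity of $Y$, the image of $\varphi$ lands inside the connected component of the identity of $\Aut(Y)$, which is $\Autzero(Y)$. This yields the desired homomorphism $f_*=\varphi\colon \Autzero(X)\to \Autzero(Y)$.

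I do not expect a substantive obstacle in this argument, as it is a formal corollary of Blanchard's lemma combined with the representability of the automorphism functor of a projective scheme. The only point requiring a little attention is the verification that the morphism $\varphi$ is multiplicative and unital, but this is, as noted, an immediate consequence of the uniqueness assertion in Proposition \ref{blanchard}; the rest of the proof is essentially bookkeeping.
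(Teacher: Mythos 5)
Your argument is correct and is essentially the standard deduction of this corollary from Blanchard's lemma; the paper itself gives no proof, simply quoting the statement from \cite[Corollary 4.2.6]{BSU}, where the argument is exactly the one you outline (apply the proposition to the tautological action of $\Autzero(X)$, use representability of $\Aut(Y)$ for $Y$ projective, deduce multiplicativity from the uniqueness clause --- equivalently from $f$ being schematically dominant --- and land in $\Autzero(Y)$ by connectedness). The only point worth spelling out is that multiplicativity should be verified on $T$-points for an arbitrary scheme $T$, not just on closed points, but the same uniqueness argument applies verbatim there.
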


\begin{remark}\label{blanchardtrivial}
	Let $\tau\colon S\to C$ be a decomposable ruled surface. Assume that $C$ has genus $g=1$ and $\seg(S)\neq 0$, or that $g\geq 2$. Then by \cite[Lemma 7]{Maruyama}, the morphism induced by Blanchard's lemma $\tau_*\colon \Autzero(S)\to \Autzero(C)$ is trivial.
\end{remark}

In the next two lemmas, we compute $\Autzero(S)$ and its orbits for a ruled surface $\tau\colon S\to C$ with $\seg(S)<-(1+\deg(K_C))$ (which is decomposable by Remark \ref{remarkruled} \ref{remarkruled.3}). 

\begin{lemma}\label{lemmadim2}
	Let $C$ be a curve of genus $g\geq 1$. Let $\tau\colon S=\PP(V)\to C$ be a decomposable $\PP^1$-bundle such that $\seg(S)<-(1+\deg(K_C))$. Let $\sigma$ be the minimal section of $\tau$ and $L(\sigma)$ be the line subbundle of $V$ associated to $\sigma$. We choose trivializations of $\tau$ such that $\sigma$ is the infinity section. Then the following hold:
	\begin{enumerate}
		\item \label{lemmadim2.1} The group $\Autzero(S)$ is isomorphic to $\mathbb{G}_m \rtimes \Gamma(C,\det(V)^{\vee} \otimes L(\sigma)^{\otimes 2})$, where $\det(V)$ denotes the determinant line bundle of $V$. This isomorphism associates $\alpha\in \mathbb{G}_m$ and $\gamma\in \Gamma(C,\det(V)^{\vee} \otimes L(\sigma)^{\otimes 2})$, to the element $\mu_{\alpha,\gamma}\in \Autzero(S)$ obtained by gluing the automorphisms:
		\begin{align*}
			U_i \times \PP^1 & \to U_i\times \PP^1 \\
			(x,[u:v]) & \mapsto 
			\left(x, [\alpha u + \gamma_{|U_i}(x) v :v]
			\right).
		\end{align*}
		\item \label{lemmadim2.2} The $\Autzero(S)$-orbits in $S$ are $\{p\}$ and $\tau^{-1}(\tau(p)) \setminus \{p\}$ for $p\in \sigma$.
	\end{enumerate}
\end{lemma}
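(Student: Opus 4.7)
The plan is to first reduce to the subgroup of automorphisms of $S$ that preserve each fibre of $\tau$ and fix $\sigma$ pointwise, then to describe these automorphisms explicitly using the splitting $V=L_1\oplus L_2$, and finally to deduce the orbit description from a base-point-freeness argument for $L(\sigma)^{\otimes 2}\otimes \det(V)^\vee$.

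First, I would argue that every element of $\Autzero(S)$ preserves each fibre of $\tau$ and fixes $\sigma$ pointwise. The hypothesis $\seg(S)<-(1+\deg K_C)\leq 0$ places us in the scope of Remark \ref{blanchardtrivial} (either $g\geq 2$, or $g=1$ with $\seg(S)\neq 0$), so $\tau_*\colon \Autzero(S)\to \Autzero(C)$ is trivial and every fibre of $\tau$ is preserved. Since $\seg(S)<0$, Remark \ref{remarkruled}(2) provides a unique section of negative self-intersection, namely $\sigma$, forcing $\Autzero(S)$-invariance of $\sigma$; combined with the fibrewise action this gives pointwise fixation of $\sigma$.

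Second, write $V=L_1\oplus L_2$ with $L(\sigma)=L_1$, so that $\det(V)^{\vee}\otimes L(\sigma)^{\otimes 2}=L_1\otimes L_2^{-1}$, and choose trivializations in which $\sigma$ is locally the infinity section $[1:0]$. Any fibrewise automorphism fixing $\sigma$ pointwise must send $[u:v]$ to $[\alpha u+\gamma v:v]$ on each chart. A short computation with the transition maps $u\mapsto \lambda_{ij}u$, $v\mapsto \nu_{ij}v$ shows that consistent gluing forces $\alpha$ to be a global nowhere-vanishing function, hence an element of $\kk^{*}=\mathbb{G}_m$, while $\gamma$ transforms as $\gamma_j=(\lambda_{ij}/\nu_{ij})\gamma_i$, which is precisely the cocycle of $L_1\otimes L_2^{-1}$. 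Conversely, every pair $(\alpha,\gamma)$ gives an element $\mu_{\alpha,\gamma}\in \Aut(S)$ of the stated form, and the resulting connected subgroup lies in $\Autzero(S)$. Together with the first step this yields part (1).

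For part (2), the points of $\sigma$ are singleton orbits. For $p\notin \sigma$ over $x=\tau(p)$, the fibre $\tau^{-1}(x)\setminus\{\sigma(x)\}$ identifies with $\mathbb{A}^1$, on which $\mu_{\alpha,\gamma}$ acts as $t\mapsto \alpha t+\gamma(x)$. Transitivity amounts to surjectivity of the evaluation $\mathrm{ev}_x\colon \Gamma(C,L_1\otimes L_2^{-1})\to (L_1\otimes L_2^{-1})_x$ for every $x\in C$. Since $\deg(L_1\otimes L_2^{-1})=-\seg(S)>1+\deg K_C=2g-1$, we have $\deg(L_1\otimes L_2^{-1})\geq 2g$, so this line bundle is globally generated and every $\mathrm{ev}_x$ is surjective. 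I do not anticipate any serious obstacle: once the reductions of the first paragraph are in place the argument is essentially linear-algebraic over $C$, the only delicate point being the sign convention of $\seg$ versus Hartshorne's invariant. The role of the strict inequality $\seg(S)<-(1+\deg K_C)$ is precisely to push the degree of $L_1\otimes L_2^{-1}$ past the threshold $2g-1$, triggering global generation and thereby transitivity on each affine fibre in part (2).
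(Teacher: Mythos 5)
Your proposal is correct and follows essentially the same route as the paper: Blanchard's lemma plus uniqueness of the minimal section to reduce to fibrewise automorphisms fixing $\sigma$, a transition-cocycle computation identifying $\gamma$ as a section of $\det(V)^{\vee}\otimes L(\sigma)^{\otimes 2}=L_1\otimes L_2^{-1}$, and base-point-freeness of that line bundle for the transitivity on $\tau^{-1}(\tau(p))\setminus\{p\}$. The only cosmetic difference is that you invoke the standard criterion that a line bundle of degree $\geq 2g$ is globally generated, whereas the paper rederives this on the spot via Serre duality and Riemann--Roch.
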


\begin{proof}
	\begin{enumerate}[wide]
		\item The proof follows from the computation made in \cite[case (b) p.92]{Maruyama}. For the sake of self-containess, we recall it below. Since $\tau$ is decomposable, we can write its transition maps as $t_{ij}\colon U_j\times \PP^1 \to U_i\times \PP^1$, $(x,[u:v])\mapsto \left(x,[a_{ij}(x)u:b_{ij}(x)v]\right)$, where $[u:v]$ denotes the coordinates of $\PP^1$, $a_{ij}\in \mathcal{O}_C(U_i\cap U_j)^*$ denotes the transition maps of the line bundle $L(\sigma)$ and $b_{ij}\in \mathcal{O}_C(U_i\cap U_j)^*$. Let $\mu\in \Autzero(S)$. The morphism induced by Blanchard's lemma $\tau_*\colon \Autzero(S)\to \Autzero(C)$ is trivial (Remark \ref{blanchardtrivial}). Moreover, $\sigma$ is fixed by $\Autzero(S)$ as it is the unique minimal section. Therefore, for each trivializing open subset $U_i\subset C$, $\mu$ induces an automorphism $\mu_i\colon U_i\times \PP^1 \to U_i\times \PP^1$, given by $(x,[u:v])\mapsto \left(x, [\alpha_i(x) u + \gamma_i(x) v:v] \right)$, where $\alpha_i\in \mathcal{O}_C(U_i)^*$ and $\gamma_i\in \mathcal{O}_C(U_i)$. The condition $\mu_i t_{ij} = t_{ij}\mu_j$ implies that $\alpha_i = \alpha_j = \alpha\in \mathbb{G}_m$ and $\gamma_i = b_{ij}^{-1}a_{ij}\gamma_j$. Since $a_{ij}b_{ij}$ are the transition maps of the line bundle $\det(V)$, and $a_{ij}$ denote the transition maps of $L(\sigma)$, it implies that $\gamma\in \Gamma(C,\det(V)^{\vee} \otimes L(\sigma)^{\otimes 2})$. The data of $\alpha \in \mathbb{G}_m$ and $\gamma\in \Gamma(C,\det(V)^{\vee} \otimes L(\sigma)^{\otimes 2})$ determine uniquely the automorphism $\mu$, this proves that we have an embedding $\Autzero(S)\hookrightarrow \mathbb{G}_m \rtimes \Gamma(C,\det(V)^{\vee} \otimes L(\sigma)^{\otimes 2})$. Conversely, one can check that the automorphisms defined in the statement commute with the transition maps, hence their gluing defines an automorphism of $S$. Because $\mathbb{G}_m \rtimes \Gamma(C,\det(V)^{\vee} \otimes L(\sigma)^{\otimes 2})$ is also connected, we get that it is isomorphic to $\Autzero(S)$.
		\item Since the morphism induced by Blanchard's lemma $\tau_*\colon \Autzero(S)\to \Autzero(C)$ is trivial (Remark \ref{blanchardtrivial}), each $\Autzero(S)$-orbit is contained in a fibre of $\tau$. As $\sigma$ is the unique section with negative self-intersection number, it is fixed pointwise by $\Autzero(S)$. It remains to see that $\Autzero(S)$ acts transitively on $\tau^{-1}(\tau(p))\setminus \{p\}$ for each $p$ lying on $\sigma$. 
		
		Let $L=\det(V)^{\vee} \otimes L(\sigma)^{\otimes 2}$. It follows from  \cite[Proposition 2.15]{fong} that $\deg(L)=-\seg(S)>1+\deg(K_C)$. 
		Let $p\in \sigma$ and let $\tau(p)=z$. We get by Serre duality that 
		\[
		h^1(C,L) = h^0(C,K_C \otimes L^{\vee}) = 0,
		\]
		where the last equality follows from the fact that $\deg(K_C \otimes L^{\vee})	<	-1$.
		Similarly we get the equality 	$h^1(C,L\otimes \mathcal{O}_C(z)^{\vee})=0$. 
		By Riemann-Roch, $h^0(C,L\otimes \mathcal{O}_C(z)^{\vee}) =\deg(L) -g < \deg(L)-g+1 =h^0(C,L)$. Therefore, $z$ is not a base point of the complete linear system $\vert L \vert$, i.e.\ there exists $\gamma \in H^0(C,L)$ such that $\gamma(z)\neq 0$, and the subgroup $\mathbb{G}_a\simeq \{\mu_{1,\lambda\gamma};\lambda\in \kk\} $ acts transitively on $\tau^{-1}(z)\setminus \{p\}$ (see \ref{lemmadim2.1} for the definition of $\mu_{1,\lambda \gamma}$).
	\end{enumerate}
\end{proof}

Let $S$ be a ruled surface as in Lemma \ref{lemmadim2}, and $\phi\colon S\dashrightarrow S'$ be an $\Autzero(S)$-equivariant birational map. 
In the following lemma, we compute the fixed points of the action of $\phi \Autzero(S)\phi^{-1}$ on $S'$.

\begin{lemma}\label{lemma2dim2}
	Let $C$ be a curve of genus $g\geq 1$. Let $\tau\colon S\to C$ be a decomposable $\PP^1$-bundle such that $\seg(S)<-(1+\deg(K_C))$. If $\tau'\colon S'\to C$ is a ruled surface and there exists an $\Autzero(S)$-equivariant birational map $\phi\colon S\dashrightarrow S'$ which is not an isomorphism, then $\seg(S')<\seg(S)$ and $\phi \Autzero(S) \phi^{-1} \subsetneq \Autzero(S')$. The fixed points of the action of $\phi\Autzero(S)\phi^{-1}$ on $S'$ are the points lying on the minimal section of $\tau'$ and the base points of $\phi^{-1}$. Moreover, we can write $\phi$ as a product of $\Autzero(S)$-equivariant elementary transformations centered on the minimal sections.
\end{lemma}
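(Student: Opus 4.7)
The plan is to follow the roadmap suggested by Lemma \ref{lemmadim2}: since the only zero-dimensional $\Autzero(S)$-orbits in $S$ are the points of the minimal section $\sigma$, the indeterminacy of $\phi$ is forced to sit on $\sigma$, and elementary transformations centered there propagate the situation. First, because $\phi$ is not an isomorphism, its indeterminacy locus is a nonempty finite $\Autzero(S)$-invariant subset; by the orbit description in Lemma \ref{lemmadim2}, it must lie in $\sigma$. Picking such a base point $p$, the elementary transformation $\epsilon\colon S \dashrightarrow S_1$ at $p$ is $\Autzero(S)$-equivariant: $p$ is fixed, the fibre $\tau^{-1}(\tau(p))$ is invariant (by Remark \ref{blanchardtrivial}, $\tau_*$ is trivial), so Blanchard's lemma gives a regular $\Autzero(S)$-action on $S_1$. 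By Remark \ref{remarkruled}, the strict transform $\sigma_1$ of $\sigma$ satisfies $\sigma_1^2 = \sigma^2 - 1$, hence $\seg(S_1) \leq \sigma_1^2 < \seg(S) < -(1+\deg(K_C))$, so the hypotheses of Lemma \ref{lemmadim2} still apply to $S_1$. The induced map $\phi_1\colon S_1 \dashrightarrow S'$ is $\Autzero(S)$-equivariant with a strictly smaller minimal resolution. Iterating, and using the finiteness of the minimal resolution of a birational map of smooth surfaces for termination, yields the decomposition of $\phi$ into $\Autzero(S)$-equivariant elementary transformations centered on minimal sections, proving the last assertion.

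From this decomposition, each elementary transformation strictly decreases the Segre invariant by $1$, so $\seg(S') = \seg(S) - n < \seg(S)$, where $n$ is the number of elementary transformations. For $\phi \Autzero(S) \phi^{-1} \subsetneq \Autzero(S')$, the inclusion holds because each step is equivariant with a regular limit action; strictness follows by a dimension count. Indeed, under the hypothesis $\deg(L) = -\seg(S) > 1 + \deg(K_C)$ with $L = \det(V)^{\vee} \otimes L(\sigma)^{\otimes 2}$, we get $h^1(C,L) = 0$ by Serre duality, so Riemann-Roch yields
\[
\dim \Autzero(S) = 1 + h^0(C,L) = 2 - g - \seg(S),
\]
and the same formula applied to $S'$ forces $\dim \Autzero(S') > \dim \Autzero(S)$.

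For the fixed points, observe that the strict transform of $\sigma$ through the chain of elementary transformations is set-theoretically preserved and ends up as the minimal section $\sigma'$ of $S'$, so $\phi \Autzero(S) \phi^{-1}$ fixes $\sigma'$ pointwise. At each intermediate elementary transformation, the contracted fibre is $\Autzero(S)$-invariant, so its image point is fixed by the conjugate action and lies off the new minimal section (as the strict transforms of the section and of the contracted fibre are disjoint in the blow-up); these points contribute exactly the base points of $\phi^{-1}$. Conversely, any fixed point $q \in S'$ outside the indeterminacy of $\phi^{-1}$ pulls back to a fixed point of $\Autzero(S)$, which necessarily lies on $\sigma$, so $q \in \sigma'$. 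The main obstacle is the inductive step, where one must carefully verify that the Segre invariant bound persists on each intermediate surface so that Lemma \ref{lemmadim2} applies throughout and that the successive induced actions remain regular.
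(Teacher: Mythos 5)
Your overall strategy (factor $\phi$ into equivariant elementary transformations and induct, using the orbit description of Lemma \ref{lemmadim2}) is the same as the paper's, and your dimension count for strictness, $\dim\Autzero(S)=1+h^0(C,L)=2-g-\seg(S)$, is a correct and clean alternative to the paper's argument that the base point of $\phi_1^{-1}$ is moved by $\Autzero(S_1)$ but fixed by the conjugate. However, there are two genuine gaps. The first is in the iteration. Your justification that the base points lie on the minimal section (``the only zero-dimensional $\Autzero$-orbits are the points of the minimal section'') is valid only at the very first step, where the acting group is all of $\Autzero(S)$. From the second step on, the acting group is the conjugate $G_i=\psi\Autzero(S)\psi^{-1}$, which is a \emph{proper} subgroup of $\Autzero(S_i)$, and by your own fixed-point statement its fixed locus is $\sigma_i$ \emph{together with} the base points of $\psi^{-1}$. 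So a base point of the remaining map $S_i\dashrightarrow S'$ could a priori be one of those extra fixed points off $\sigma_i$; performing an elementary transformation there would \emph{increase} the Segre invariant and destroy the claim $\seg(S')=\seg(S)-n$. This is exactly why the paper starts from a factorization of minimal length (via \cite[Theorem 7.7]{DI}) and shows by a local cancellation argument that an elementary transformation centered at a base point of $\psi^{-1}$ would cancel against an earlier one, contradicting minimality. Your greedy construction does not address this, and the ``main obstacle'' you flag (persistence of the Segre bound) is not the real one --- the bound only improves; the issue is the location of the centers.

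The second gap is in the converse inclusion for the fixed locus. Your argument ``a fixed point $q$ outside the indeterminacy of $\phi^{-1}$ pulls back to a fixed point of $\Autzero(S)$, hence lies on $\sigma$, hence $q\in\sigma'$'' breaks down precisely on the fibres contracted by $\phi^{-1}$: for every point $q$ of the fibre $f_{q_1}$ through a base point $q_1$ of $\phi^{-1}$ (other than $q_1$ itself), the image $\phi^{-1}(q)$ is the center $p\in\sigma$ of the corresponding elementary transformation, so the chain of implications would wrongly place all of $f_{q_1}$ on $\sigma'$. The implication $\phi^{-1}(q)\in\sigma\Rightarrow q\in\sigma'$ requires $\phi^{-1}$ not to contract a curve through $q$. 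To rule out extra fixed points on $f_{q_1}$ one needs a positive argument, which is the paper's explicit local computation: the conjugate $\phi_1\mathbb{G}_m\phi_1^{-1}$ acts on $f_{q_1}$ by $[u:v]\mapsto[\alpha u:v]$ in suitable coordinates, hence acts transitively on $f_{q_1}\setminus\{q_1,q_1'\}$, leaving only $q_1$ and $q_1'=f_{q_1}\cap\sigma_1$ fixed. Without this, the fixed-point description --- which is the input needed for Lemma \ref{SxPP^n} and Theorem \ref{higherdim} --- is not established.
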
 

\begin{proof}
	By \cite[Theorem 7.7]{DI}, we can write $\phi = \phi_n \cdots \phi_1$ where each $\phi_i$ is an $\Autzero(S)$-equivariant elementary transformation. Without loss of generality, we can assume that this decomposition is minimal (i.e.\ the number of elementary transformations $n$ is minimal among all possible factorizations), and we prove the statement by induction on $n\geq 1$.
	
	Let $\sigma$ be the minimal section of $\tau$. By Lemma \ref{lemmadim2} \ref{lemmadim2.2}, the algebraic group $\Autzero(S)$ acts transitively on $\tau^{-1}(\tau(p))\setminus \{p\}$ for every $p\in \sigma$. Since $\phi_1$ is $\Autzero(S)$-equivariant, it follows that $\phi_1\colon S\dashrightarrow S_1$ is an elementary transformation centered on a point $p_1\in \sigma$. The strict transform of $\sigma$ by $\phi_1$ is the minimal section $\sigma_1$ of the ruled surface $\tau_1\colon S_1\to C$, and so $\seg(S_1)=\seg(S)-1$. Since the base point $q_1$ of $\phi_1^{-1}$ does not lie on the minimal section $\sigma_1$ of $\tau_1$, it follows by Lemma \ref{lemmadim2} \ref{lemmadim2.2} that $q_1$ is not fixed by $\Autzero(S_1)$. 
	Since $q_1$ is fixed by $\phi_1 \Autzero(S)\phi_1^{-1}$, we have the strict inclusion $\phi_1 \Autzero(S)\phi_1^{-1}\subsetneq \Autzero(S_1)$.
	In the complement of the fibres $f_{p_1}\subset S$ and $f_{q_1}\subset S_1$ containing the points $p_1$ and $q_1$ respectively, $\phi_1$ is an isomorphism. 	
	Therefore, by Lemma \ref{lemmadim2}, the only fixed points of $\phi_1 \Autzero(S)\phi_1^{-1}$ that lie in the complement of $f_{q_1}$ are the points on the minimal section $\sigma_1$.	
	It remains to check that the only fixed points on $f_{q_1}$ are the point $q_1'\in \sigma_1$ and the base point $q_1$ of $\phi^{-1}$.
	Let $U$ be a trivializing open subset of $\tau$ with $\tau(p_1)\in U$, and let $f\in \mathcal{O}_C(U)$ such that $\mathrm{div}(f)_{|U}=\tau(p_1)$. 
	We also choose trivializations of $\tau$ such that $\sigma$ is the infinity section.
	Up to isomorphisms at the source and the target, ${\phi_1}_{|U}$ equals $(x,[u:v])\mapsto (x,[f(x)u:v])$. By Lemma \ref{lemmadim2} \ref{lemmadim2.1}, there is an action of $\mathbb{G}_m$ on $S$ given locally by $(x,[u:v])\mapsto (x,[\alpha u:v])$. It implies that there is an action of $\phi_1 \mathbb{G}_m \phi_1^{-1}$ on $S_1$, given locally by $(x,[u:v])\mapsto (x,[\alpha f(x)u:f(x)v])=(x,[\alpha u :v])$. Therefore, $\phi_1 \mathbb{G}_m \phi_1^{-1}\subset \Autzero(S')$ acts transitively on $ f_{q_1}\setminus \{q_1,q_1'\}$. Since $\phi_1 \Autzero(S)\phi_1^{-1}\subset \Autzero(S')$ acts fibrewise (Remark \ref{blanchardtrivial}) and is connected, we get that $q_1$ and $q_1'$ are the fixed points of the action of $\phi_1 \Autzero(S) \phi_1^{-1}$ on $f_{q_1}$.
	
	Assume the statement holds for the birational map $\psi =\phi_{i} \cdots \phi_1\colon S\dashrightarrow S_i$, for some $i\geq 1$, and where $\tau_i\colon S_i \to C$ is a ruled surface with a minimal section $\sigma_i$. We now prove that the statement is then true for $\phi_{i+1}\psi$. By induction, the fixed points of $\psi \Autzero(S)\psi^{-1}$ on $S_i$ are the points lying on the minimal section $\sigma_i$ and the base points of $\psi^{-1}$.
	 Assume that $\phi_{i+1}$ is centered on a base point of $\psi^{-1}$, which is (the image of) the base point of the inverse of a previous elementary transformation $\phi_j$. A local calculation yields that we may cancel both $\phi_j$ and $\phi_{i+1}$, which contradicts the minimality of the factorization of $\phi$. So $\phi_{i+1}$ is centered on a point lying on the minimal section $\sigma_i$. Hence $\seg(S_{i+1}) = \seg(S_i)-1<\seg(S)$ by induction, and $\phi_{i+1}(\psi \Autzero(S) \psi^{-1})\phi_{i+1}^{-1} \subset \Autzero(S_{i+1}) $. The base point of $\phi_{i+1}$ is fixed by $\phi_{i+1}(\psi \Autzero(S) \psi^{-1})\phi_{i+1}^{-1}$, but is not fixed by $\Autzero(S_i)$ (by Lemma \ref{lemmadim2}). Thus, we get the strict inclusion $\phi_{i+1}(\psi \Autzero(S) \psi^{-1})\phi_{i+1}^{-1} \subsetneq \Autzero(S_{i+1}) $.
\end{proof}

The infinite increasing sequences of automorphism groups given in \cite[Theorem A]{fong} can be obtained from Lemma \ref{lemma2dim2}, but they do not imply that $\Autzero(S)$ is not contained in a maximal connected algebraic subgroup. As it is explained below, we can get an infinite increasing sequence of connected algebraic subgroups, where each of them is included in a maximal one, which a fortiori cannot be the same for all of them.

\begin{remark}\label{infiniteseq}
	Let $n\geq d\geq 2$. Define the connected algebraic groups $$G_d=\{\mathbb{A}^2\to \mathbb{A}^2, (x,y)\mapsto (x,y+p(x)),p\in \kk[x]_{\leq d}\},$$
	acting regularly on $\mathbb{A}^2$, and then birationally on $\PP^2$ via any embedding $\mathbb{A}^2 \hookrightarrow \PP^2$. Then $G_d\subsetneq G_{d+1}$ for all $d$. On the other hand, using an explicit description of $\Autzero(\FF_n)$ from \cite[\S 4.2]{Blanc}, we get for all $n\geq d$ that $G_d$ is a subgroup of $\Autzero(\FF_n)$, which is a maximal connected algebraic subgroup of $\Bir(\PP^2)$.
\end{remark}

Notice that for any variety $X$, using Remark \ref{infiniteseq}, we may produce an infinite increasing sequence of connected algebraic subgroups of $\Bir(X \times \PP^2)$. In particular, for $n\geq 2$ and $C$ a curve of positive genus, the same is true for $\Bir(C\times \PP^n)\simeq \Bir(C\times \PP^{n-2} \times \PP^2)$.

We reprove below partially \cite[Theorem C]{fong}, without using \cite[Theorem B]{fong}.

\begin{proposition}\label{dim2}
	Let $C$ be a curve of genus $g\geq 1$ and let $\tau\colon S\to C$ be a decomposable $\PP^1$-bundle such that $\seg(S)<-(1+\deg(K_C))$. Then $\Autzero(S)$ is not contained in a maximal connected algebraic subgroup of $\Bir(S)$.
\end{proposition}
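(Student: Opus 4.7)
The plan is to argue by contradiction: assume $\Autzero(S)$ is contained in a maximal connected algebraic subgroup $G \subset \Bir(S)$. The strategy is first to show that, up to birational conjugation, we may take $G = \Autzero(S)$ itself; and then to exhibit an $\Autzero(S)$-equivariant elementary transformation producing a strictly larger connected algebraic subgroup of $\Bir(S)$, contradicting maximality.

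For the reduction, I would apply Weil's regularization theorem to $G$, obtaining a birational map $\phi\colon S \dashrightarrow S'$ with $\phi G \phi^{-1} \subset \Autzero(S')$ for some smooth projective surface $S'$. Running a $G$-equivariant MMP on $S'$, and using that $S$ is birational to $C \times \PP^1$ with $C$ of positive genus (so no minimal model of $S'$ can be $\PP^2$ or a Hirzebruch surface), we may assume that $\tau'\colon S' \to C$ is a $\PP^1$-bundle. Maximality of $G$ then forces $\phi G \phi^{-1} = \Autzero(S')$. Since $\Autzero(S) \subset G$, the map $\phi$ is in particular $\Autzero(S)$-equivariant, so Lemma \ref{lemma2dim2} applies: either $\phi$ is an isomorphism, or $\seg(S') < \seg(S) < -(1+\deg(K_C))$. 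In either case $S'$ is a decomposable ruled surface satisfying the hypotheses of the proposition, and after replacing $(S,G)$ by $(S',\Autzero(S'))$ we may assume that $G = \Autzero(S)$.

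For the enlargement, let $\sigma$ be the minimal section of $\tau$. By Lemma \ref{lemmadim2}(2), $\Autzero(S)$ fixes $\sigma$ pointwise and preserves each fibre of $\tau$. Hence, for any $p \in \sigma$, the elementary transformation $\psi\colon S \dashrightarrow S_1$ centered at $p$ is $\Autzero(S)$-equivariant, not an isomorphism, and by Remark \ref{remarkruled}(1) the surface $S_1$ has $\seg(S_1) = \seg(S)-1$, so it still satisfies the hypotheses of the proposition. Lemma \ref{lemma2dim2} then yields the strict inclusion $\psi \Autzero(S)\psi^{-1} \subsetneq \Autzero(S_1)$, whence the connected algebraic subgroup $\psi^{-1}\Autzero(S_1)\psi \subset \Bir(S)$ strictly contains $G = \Autzero(S)$, contradicting the maximality of $G$.

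The main obstacle lies in the regularization step: one must argue that $G$ can be regularized on a $\PP^1$-bundle $S' \to C$ specifically, rather than on some other birational model. This combines Weil's regularization theorem with a $G$-equivariant MMP in dimension two, together with the observation that the positive genus of $C$ excludes rational minimal models; the rest of the argument is then a clean application of the two preliminary lemmas.
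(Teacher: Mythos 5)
Your proposal is correct and follows essentially the same route as the paper: regularize $G$ via Weil's theorem, pass to a minimal model (a ruled surface over $C$, since $g\geq 1$ excludes rational minimal models), invoke Lemma \ref{lemma2dim2} to control the Segre invariant and verify the hypotheses still hold, and then contradict maximality with one more elementary transformation centered on the minimal section. The only cosmetic difference is that you normalize to $G=\Autzero(S)$ before performing the final elementary transformation, whereas the paper applies it directly on the model $S'$; the content is identical.
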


\begin{proof}
	Assume that $\Autzero(S)$ is contained in a maximal connected algebraic subgroup $G$ of $\Bir(S)$. Then $G$ acts regularly on a surface $Y$ by Weil regularization theorem (see \cite{Weil}, or \cite{Zaitsev,Kraft} for a modern proof). By \cite[Corollary 3]{Brionnormal}, we can choose $Y$ to be normal and projective. Using an equivariant resolution of singularities (see \cite[Remark B, p.155]{Lipman}), we can also assume $Y$ to be smooth. Then by Blanchard's lemma (see Proposition \ref{blanchard}), the successive contractions of the $(-1)$-curves gives rise to a ruled surface $S'$ such that the induced birational morphism $Y\to S'$ is $G$-equivariant. Since $G$ is maximal and connected, it follows that $G\simeq \Autzero(S')$. The induced birational map $\phi\colon S\dashrightarrow S'$ is $\Autzero(S)$-equivariant. If $\phi$ is an isomorphism, then $\seg(S)= \seg(S')$. Else $\phi$ factorises as product of $\Autzero(S)$-equivariant elementary transformations centered on the minimal sections and $\seg(S')<\seg(S)$ (by Lemma \ref{lemma2dim2}). In both cases, we have $\seg(S')\leq \seg(S)$. Let $\epsilon\colon S'\dashrightarrow S''$ be an elementary transformation centered on the minimal section of $\tau'\colon S'\to C$. Then again by Lemma \ref{lemma2dim2}, it follows that $\epsilon \Autzero(S') \epsilon^{-1}\subsetneq \Autzero(S'')$, which contradicts the maximality of $G$ as a connected algebraic subgroup of $\Bir(S)$.
\end{proof}

\section{Higher dimensional case}

In what follows, we would like to utilize the machinery of the $G$-Sarkisov program for a connected algebraic group $G$.
Thus from now on, we furthermore assume that $\mathrm{char}(\kk) = 0$. The $G$-Sarkisov program is a non-deterministic algorithm that decomposes every $G$-equivariant birational map between two $G$-Mori fibre spaces as a product of simpler maps called $G$-Sarkisov links.
Its non-equivariant version was proven by Hacon and McKernan in \cite{HM13} and, building on their result, Floris proved the $G$-equivariant version in \cite{Floris}.
We follow the strategy of the proof of Proposition \ref{dim2}, and in view of using $G$-Sarkisov program, we recall first the definition:

\begin{definition}\label{Sarkisov link}
	Let $G$ be a connected algebraic group. A \emph{$G$-Mori fibre space} is a Mori fibre space with a regular action of $G$. Let $\pi_1 \colon X_1 \to B_1$ and $\pi_2 \colon X_2 \to B_2$ be two birational $G$-Mori fibre spaces.
	A \emph{$G$-Sarkisov diagram} between $X_1/B_1$ and $X_2/B_2$ is a commutative diagram of the form
	\[
	\xymatrix{
		Y_1 \ar[d]_{\alpha_1} \ar@{..>}[rr]^{\chi}  && Y_2 \ar[d]^{\alpha_2}\\
		X_1 \ar[d]_{\pi_1} && X_2 \ar[d]^{\pi_2} \\
		B_1 \ar[dr]_{s_1} & & B_2\ar[dl]^{s_2}\\
		& R
	}
	\]
	which satisfies the following properties:
	\begin{enumerate}
		\item  \label{prop:MMP} all morphisms appearing in the diagram are either isomorphisms or outputs of some $G$-equivariant MMP on a $\mathbb{Q}$-factorial klt $G$-pair $(Z,\Phi)$ (recall that a $G$-pair is a pair $(Z,\Phi)$ such that $G$ acts regularly on $Z$ and there is an induced regular action on $\Phi$),
		\item \label{prop:singularities} maximal dimensional varieties have $\mathbb{Q}$-factorial and terminal singularities,
		\item $\alpha_1$ and $\alpha_2$ are $G$-equivariant divisorial contractions or isomorphisms,
		\item $s_1$ and $s_2$ are $G$-equivariant extremal contractions or isomorphisms,
		\item $\chi$ is an isomorphism or a composition of $G$-equivariant anti-flips/flop/flips (in that order),
		\item the relative Picard rank $\rho(Z/R)$ of any variety $Z$ in the diagram is at most $2$. \label{prop:5}		
	\end{enumerate}
	We call $R$ the \emph{base} of the diagram. 
	
	Property \ref{prop:5} implies that $\alpha_1$ is a divisorial contraction if and only if $s_1$ is an isomorphism. A similar statement holds for the right hand side of the diagram. Depending whether ${s_1}$ or $s_2$ is an isomorphism, we get four types of Sarkisov diagrams: \vspace{0.3cm}
	
	\noindent
	\begin{minipage}{.25\linewidth}
		\begin{center}
			\emph{Type I}
		\end{center}
		\[
		\xymatrix{
			Y_1 \ar@{..>}[r] \ar[d] & X_2 \ar[d]^{\pi_2} \\
			X_1  \ar[d]_{\pi_1} & B_2 \ar[dl]\\
			*+[r]{B_1=R}
		}
		\]
	\end{minipage}\hspace{-0.5cm}
	\begin{minipage}{.25\linewidth}
		\begin{center}
			\emph{Type II}
		\end{center}
		\[
		\xymatrix@C=0.2cm{
			Y_1 \ar@{..>}[rr] \ar[d] && Y_2 \ar[d]\\
			X_1  \ar[d]_{\pi_1} && X_2 \ar[d]^{\pi_2} \\
			*+[r]{B_1=}& R &*+[l]{=B_2}
		}
		\]
	\end{minipage}\hspace{-0.5cm}
	\begin{minipage}{.25\linewidth}
		\begin{center}
			\emph{Type III}
		\end{center}
		\[
		\xymatrix{
			X_1 \ar@{..>}[r] \ar[d]_{\pi_1} & Y_2 \ar[d] \\
			B_1  \ar[rd] & X_2 \ar[d]^{\pi_2}\\
			&*+[l]{R=B_2}
		}
		\]
	\end{minipage}\hspace{-0.2cm}
	\begin{minipage}{.25\linewidth}
		\begin{center}
			\emph{Type IV}
		\end{center}
		\[
		\xymatrix@C=.4cm{
			X_1 \ar@{..>}[rr] \ar[d]_{\pi_1} && X_2 \ar[d]^{\pi_2} \\
			B_1  \ar[rd] && B_2 \ar[ld]\\
			&R &.
		}
		\]
	\end{minipage} \vspace{0.3cm}\\
	The birational map $\psi = \alpha_2 \chi \alpha_1^{-1}$ between $X_1$ and $X_2$ is called a \emph{$G$-Sarkisov link}. 
\end{definition}

\begin{remark}
	Property \ref{prop:singularities} does not follow directly from the original definition of a ($G$-)Sarkisov diagram of \cite{HM13} and \cite{Floris}.
	For a proof, see \cite[Proposition 4.25]{BLZ}.
\end{remark}

In subsequent proofs we are going to make heavy use of the following elementary but useful observation:

\begin{remark}\label{rank 2 fibrations}
	Let $Z$ be one of the varieties appearing in a $G$-Sarkisov diagram, such that the relative Picard rank $\rho(Z/R)$ is $2$. Then the $G$-Sarkisov diagram is uniquely determined by the datum of $Z \to R$, by a process known as the \emph{$2$-ray game} (see \cite[section 2.F]{BLZ}).
	
	More specifically, the $2$-ray game is a deterministic process that assigns to any such $Z \to R$ a $G$-Sarkisov diagram.
	Moreover any $G$-Sakrisov diagram can be recovered by the $2$-ray game on any of its relative Picard rank $2$ morphisms.
	Thus, up to orientation of the diagram, there is a unique $G$-Sarkisov diagram that contains $Z \to R$.
\end{remark}

\begin{lemma}\label{SxPP^n}
	Let $n\geq 1$ and $C$ be a curve of genus $g\geq 1$. 
	Let $\tau\colon S\to C$ be a decomposable $\PP^1$-bundle such that $\seg(S)< -(1+\deg(K_C))$ with minimal section $\sigma$ and let $\phi\colon S \rmap S'$ be an $\Autzero(S)$-equivariant birational map (possibly the identity) to a $\PP^1$-bundle $\tau'\colon S' \to C$. 
	Let $\pi'=\tau'\times id_{\PP^n}\colon S'\times \PP^n \to C\times \PP^n$ and $\pi_1'\colon S'\times \PP^n \to S'$ be the projection to the first factor. 
	Then the following hold:
	\begin{enumerate}
		\item \label{3.5.1} The only non-trivial $\Autzero(S\times \PP^n)$-Sarkisov diagrams, where $\pi'\colon S'\times \PP^n\to C\times \PP^n$ is the LHS Mori fibre space, are the following ones:
		\[
		\begin{tikzcd} [column sep=3em,row sep = 3em]
			T\times \PP^n\arrow[d,"\alpha",swap]\arrow[r,equal] & T\times \PP^n\arrow[d,"\beta"] &   S'\times \PP^n\arrow[d,"\pi'",swap]\arrow[rr,equal] && S'\times \PP^n\arrow[d,"\pi_1'"]  \\
			S'\times \PP^n\arrow[d,"\pi'",swap] & S''\times \PP^n\arrow[d,"\pi''"]& C\times \PP^n\arrow[rd,"p_1",swap] && S'\arrow[ld,"\tau'"] \\
			C\times \PP^n\arrow[r,equal] & C\times \PP^n && C&.
		\end{tikzcd}
		\] 
		In the first case, the induced Sarkisov link $S'\times \PP^n \rmap S'' \times \PP^n$ is equal to $\psi \times id_{\PP^n}$, where $\psi\colon S' \dashrightarrow S''$ is an elementary transformation of $\PP^1$-bundles whose center $p$ is a point fixed by $\phi \Autzero(S)\phi^{-1}$, and $T$ is the blow-up of $S'$ at $p$. 
		In the second case, the induced Sarkisov link $S'\times \PP^n \rmap S' \times \PP^n$ is equal to $id_{S'\times \PP^n}$. 
		
		\item The only non-trivial $\Autzero(S\times \PP^n)$-Sarkisov diagrams, where $\pi_1'\colon S'\times \PP^n\to S'$ is the LHS Mori fibre space, are the following ones:
		\[
		\begin{tikzcd} [column sep=3em,row sep = 3em]
			T\times \PP^n\arrow[d,"\eta\times id_{\PP^n}",swap]\arrow[r,equal] & T\times \PP^n\arrow[d,"\pi_1''"] &   S'\times \PP^n\arrow[d,"\pi_1'",swap]\arrow[rr,equal] && S'\times \PP^n\arrow[d,"\pi'"]  \\
			S'\times \PP^n\arrow[d,"\pi_1'",swap] & T\arrow[ld,"\eta"]& S'\arrow[rd,"\tau",swap] && C\times \PP^n\arrow[ld,"p_1"] \\
			S' &&& C &.
		\end{tikzcd}
		\] 
		The induced Sarkisov link $S'\times \PP^n \rmap T \times \PP^n$ is equal to $\eta^{-1} \times id_{\PP^n}$ in the former case and $id_{S'\times \PP^n}$ in the latter, where $\eta\colon T \to S'$ is the blowup of $S'$ at point $p$ fixed by $\phi \Autzero(S)\phi^{-1}$.
	\end{enumerate}
\end{lemma}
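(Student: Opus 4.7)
The plan is to exploit the $2$-ray game characterization of Sarkisov diagrams (Remark \ref{rank 2 fibrations}): any such diagram is determined, up to orientation, by one of its rank-$2$ fibrations. Thus the classification reduces to enumerating the $G$-equivariant rank-$2$ initiations compatible with the prescribed Mori fibre space, where $G:=\Autzero(S\times \PP^n)\simeq \Autzero(S)\times \PGL_{n+1}(\kk)$ by \cite[Corollary 4.2.7]{BSU}. As a key preliminary, I would describe the $G$-orbit structure on $S'\times \PP^n$: since $\tau_*\colon \Autzero(S)\to \Autzero(C)$ is trivial (Remark \ref{blanchardtrivial}) and $\PGL_{n+1}(\kk)$ acts transitively on $\PP^n$, Lemma \ref{lemma2dim2} implies that the only $G$-invariant irreducible closed subvarieties of $S'\times \PP^n$ of dimension at most $n$ are the products $\{p\}\times \PP^n$ with $p$ lying on the minimal section $\sigma'$ of $\tau'$ or at a base point of $\phi^{-1}$.

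For part (1), starting from the LHS Mori fibre space $\pi'$, a rank-$2$ initiation is either a $G$-equivariant divisorial contraction of $S'\times \PP^n$ (Types I/II) or a $G$-equivariant extremal contraction of the base $C\times \PP^n$ (Types III/IV). In the first case, the irreducible center must be one of the $\{p\}\times \PP^n$ above, and by smoothness of this codimension-$2$ center in the smooth ambient space the divisorial extraction is forced to be the product blow-up $\alpha=\eta\times id_{\PP^n}\colon T\times \PP^n\to S'\times \PP^n$ with $\eta\colon T=Bl_p(S')\to S'$. The $2$-ray game on $T\times \PP^n\to C\times \PP^n$ then supplies the other extremal contraction $\beta\times id_{\PP^n}$, where $\beta\colon T\to S''$ contracts the strict transform of the fibre $\tau'^{-1}(\tau'(p))$; the resulting link is the elementary transformation $\psi\times id_{\PP^n}$, which is precisely the first diagram of the statement. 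In the second case, since $G$ acts trivially on $C$ and transitively on $\PP^n$, the only $K$-negative $G$-equivariant extremal contraction of $C\times \PP^n$ is the projection $p_1\colon C\times \PP^n\to C$; the $2$-ray game on $S'\times \PP^n\to C$ (of relative Picard rank $2$) then yields the second, Type IV diagram, whose link is the identity and whose effect is to switch between the fibrations $\pi'$ and $\pi_1'$.

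Part (2) proceeds in strict parallel. The divisorial contractions of $S'\times \PP^n$ admit the same analysis and produce $Y_1=T\times \PP^n$; the $2$-ray game over $R=S'$ contracts the two extremal rays to $S'\times \PP^n$ and to $T$ via the natural $\PP^n$-projection, giving the first diagram. For extremal contractions of the base $S'$: by adjunction, $K_{S'}\cdot \sigma'=2g-2-\seg(S')>0$ since $\seg(S')\leq \seg(S)<-(1+\deg(K_C))$, so $\sigma'$ is $K$-positive and the only $K$-negative extremal ray of $S'$ is the fibre class, whence $\tau'\colon S'\to C$ is the only $G$-equivariant extremal contraction; this reproduces the Type IV diagram as the second diagram of part (2). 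The main subtlety is to rule out more exotic $G$-equivariant $\mathbb{Q}$-factorial terminal divisorial extractions with center $\{p\}\times \PP^n$; this is handled by the smoothness of the center inside the smooth variety $S'\times \PP^n$, which forces the extraction to be the product blow-up.
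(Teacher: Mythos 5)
Your proposal follows essentially the same route as the paper: identify the $G$-orbit structure on $S'\times \PP^n$ via Lemma \ref{lemma2dim2} and \cite[Corollary 4.2.7]{BSU}, deduce that any divisorial contraction must have center $\{p\}\times \PP^n$ with $p$ fixed and hence be the product blow-up (the paper cites \cite[Lemma 2.13]{BLZ} for the step you justify by smoothness of the codimension-$2$ center), and then let the $2$-ray game of Remark \ref{rank 2 fibrations} reconstruct the full diagram; the Type III/IV cases are likewise reduced to classifying the rank-one contractions of the base. The one sub-step where you genuinely diverge is the exclusion of the ``wrong'' base contractions: the paper rules out $C\times \PP^n\to \PP^n$ and the contraction of the minimal section of $S'$ by invoking \cite{HM} (fibres/exceptional loci of MMP outputs on klt pairs are rationally connected, whereas $C$ has positive genus), while you use intersection theory — the covering family $C\times\{pt\}$ is $K$-non-negative, and by adjunction $K_{S'}\cdot\sigma'=2g-2-\seg(S')>0$. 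Your argument works, but note that property (1) of Definition \ref{Sarkisov link} only requires the contraction to be $(K_Z+\Phi)$-negative for \emph{some} klt boundary $\Phi$, not $K$-negative; your computation survives this refinement (for $C\times\{pt\}$ because it moves in a covering family, so $\Phi\cdot(C\times\{pt\})\ge 0$, and for $\sigma'$ because $\Phi\cdot\sigma'\ge a\,\seg(S')>\seg(S')$ for the coefficient $a<1$ of $\sigma'$ in $\Phi$, giving $(K_{S'}+\Phi)\cdot\sigma'>2g-2\ge 0$), but this should be said explicitly, since as written ``$K$-negative'' is not the criterion the definition imposes.
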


\begin{proof}
	\begin{enumerate}[wide]		
		\item\label{item:1} We distinguish between two cases depending on the base $R$ of the diagram: 
		if $R = C\times \PP^n$ then we have a link of Type I or II and so 		
		the first step of the link is an $\Autzero(S\times \PP^n)$-equivariant divisorial contraction $\alpha \colon Y \to S' \times \PP^n$. 
		Note that by \cite[Corollary 4.2.7]{BSU}, it follows that $(\phi\times id_{\PP^n})\Autzero(S\times \PP^n)(\phi\times id_{\PP^n})^{-1}\simeq \phi \Autzero(S)\phi^{-1}\times \PGL_{n+1}(\kk)$.		
		Let $(q,x) \in S' \times \PP^n$ be a point in the center of $\alpha$. 
		If $q$ is not point fixed by $\phi \Autzero(S)\phi^{-1}$, 
		then  and by Lemma \ref{lemmadim2} and the description of $\phi \Autzero(S)\phi^{-1}$, the closure of the orbit of $(q,x)$ is a Cartier divisor and thus $\alpha$ is an isomorphism, contradicting the assumption that $\alpha$ is a divisorial contraction.

		Thus we may assume that $q$ is fixed by $\phi\Autzero(S)\phi^{-1}$. 
		In that case the orbit of $(q,x)$ is precisely $\{q\} \times \PP^n$. Notice that the codimension of $\{q\}\times \PP^n$ is $2$ and so by \cite[Lemma 2.13]{BLZ}
		\[
		\alpha = (\eta \times id_{\PP^n}) \colon T \times \PP^n \to S' \times \PP^n,
		\]
		where $\eta \colon T \to S'$ is the blowup of $S'$ at $q$.
		By Remark \ref{rank 2 fibrations}, the unique Sarkisov diagram containing $T\times \PP^n \to C \times \PP^n$ is the one given in the statement.
		
		We now consider the case when $R \neq C \times \PP^n$. Then we have a contraction $C \times \PP^n \to R$ of relative Picard rank $1$.
		Since $\rho(C \times \PP^n)= 2$, the cone of curves $\NE(C \times \PP^n)$ has two extremal rays and so there are only two such contractions, namely the projections to the two factors: $C \times \PP^n \to C$ and $C \times \PP^n \to \PP^n$.
		However, by property \ref{prop:MMP} of Definition \ref{Sarkisov link}, $C \times \PP^n \to \PP^n$ would have to be an output of some MMP on a klt pair $(Z,\Phi)$, and thus by \cite{HM} its exceptional locus would be rationally connected, a contradiction. Thus $R = C$ and again we conclude by Remark \ref{rank 2 fibrations} for $S'\times \PP^n \to C\times \PP^n$.
		
		\item We again proceed by a similar distinction of cases. If $R =  S'$ then, as in the proof of \ref{item:1}, the first step is an $\Autzero(S \times \PP^n)$-equivariant divisorial contraction $\eta \times id_{\PP^n} \colon T \times \PP^n \to S' \times \PP^n$, where $\eta\colon T\to S'$ is the blow-up of a point of $S'$ fixed by $\phi \Autzero(S) \phi^{-1}$, and we conclude by Remark \ref{rank 2 fibrations}.
		
		If $R \neq S'$, then $S' \to R$ is one of the two morphisms $S' \to C$ or $S' \to \check{S'}$, where the latter is the contraction of the minimal section. 
		Again, by \cite{HM} we may exclude the latter case since its exceptional locus is not rationally connected. 
		Finally, Remark \ref{rank 2 fibrations}, once again, guarantees that the Sarkisov diagram is the one in the statement.
	\end{enumerate}
	
\end{proof}

\begin{lemma}\label{S'xPP^n}
	Let $n\geq 1$ and $C$ be a curve of genus $g\geq 1$. Let $\tau\colon S\to C$ be a decomposable $\PP^1$-bundle such that $\seg(S)< -(1+\deg(K_C))$ with minimal section $\sigma$. Let $\phi\colon S\dashrightarrow S'$ be an $\Autzero(S)$-equivariant birational map, with $S'$ being a smooth projective surface which is not minimal. 
	Denote by $\pi_1'\colon S'\times \PP^n \to S'$ the projection to the first factor. 
	Then the only non-trivial $\Autzero(S\times \PP^n)$-Sarkisov diagrams, where $\pi_1'\colon S'\times \PP^n\to S'$ is the LHS Mori fibre space, are the following ones:
	\[
	\begin{tikzcd} [column sep=3em,row sep = 3em]
		T\times \PP^n\arrow[d,"\eta\times id_{\PP^n}",swap]\arrow[r,equal] & T\times \PP^n\arrow[d,"\pi_1''"]& S'\times \PP^n\arrow[d,"\pi_1'",swap]\arrow[r,equal] & S'\times \PP^n\arrow[d,"\kappa\times id_{\PP^n}"]   \\
		S'\times \PP^n\arrow[d,"\pi_1'",swap] & T\arrow[ld,"\eta"] & S'\arrow[rd,"\kappa",swap] & T\times \PP^n\arrow[d,"\pi_1''"]\\
		S' &&& T. 
	\end{tikzcd}
	\] 
	In the first case, $\eta \colon T \to S'$ is the blow-up of a point $p$ fixed by $\phi \Autzero(S)\phi^{-1}$.
	In the second case, $\kappa \colon S' \to T$ is the contraction of a $(-1)$-curve $l$.
	In both cases, $\pi_1''$ denotes the projection to the first factor.
\end{lemma}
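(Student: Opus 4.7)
We mirror the strategy of the proof of Lemma \ref{SxPP^n}, partitioning the possible non-trivial $\Autzero(S\times \PP^n)$-Sarkisov diagrams according to the base $R$. By \cite[Corollary 4.2.7]{BSU}, $\Autzero(S\times \PP^n)\simeq \Autzero(S)\times \PGL_{n+1}(\kk)$, and via $\phi \times id_{\PP^n}$ this acts on $S'\times \PP^n$ through the subgroup $G_1\times \PGL_{n+1}(\kk)$ of $\Autzero(S'\times \PP^n)$, where $G_1 := \phi\Autzero(S)\phi^{-1}$. In each case, Remark \ref{rank 2 fibrations} will recover the remainder of the diagram from any of its relative Picard rank $2$ morphisms.

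In the case $R = S'$, the diagram is of Type I or II and begins with an equivariant divisorial contraction $\alpha\colon Y \to S'\times \PP^n$; denote its exceptional center by $\Gamma$, a closed $(G_1\times \PGL_{n+1}(\kk))$-invariant subvariety of dimension at most $n$. For any $(q,x)\in \Gamma$, the orbit of $(q,x)$ equals $(G_1\cdot q) \times \PP^n$, since $\PGL_{n+1}(\kk)$ acts transitively on $\PP^n$; if $q$ were not fixed by $G_1$, this orbit would have dimension at least $n+1$, contradicting $\dim \Gamma \leq n$. Hence each $(q,x) \in \Gamma$ has $q$ fixed by $G_1$, and, as $\alpha$ is extremal, $\Gamma$ is irreducible, forcing $\Gamma = \{q\}\times \PP^n$ for a single $G_1$-fixed point $q\in S'$. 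Applying \cite[Lemma 2.13]{BLZ} identifies $\alpha$ with $\eta \times id_{\PP^n}$, where $\eta\colon T \to S'$ is the blow-up of $S'$ at $q$, and the $2$-ray game on $T\times \PP^n \to S'$ produces the first diagram in the statement.

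In the case $R \neq S'$, we analyze the possible $G_1$-equivariant extremal contractions $S' \to R$. If $\dim R = 2$, then $S'\to R$ is a birational morphism of smooth projective surfaces with $\rho(S'/R)=1$, hence the contraction of a single $(-1)$-curve $l$; since $G_1$ is connected, $l$ is $G_1$-invariant, and the $2$-ray game applied to $S'\times \PP^n \to R$ yields the second diagram. If $\dim R \leq 1$, then $S'\to R$ is a Mori fibration, so either $R$ is a point with $S'\simeq \PP^2$, or $R$ is a curve over which $S'\to R$ is a $\PP^1$-bundle; the latter would force $S'$ to be minimal (if $g(R)\geq 1$) or a Hirzebruch surface $\FF_n$ (if $R\simeq \PP^1$). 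In every such subcase $S'$ would be rational, contradicting the non-rationality of $S'$ inherited via $\phi$ from $S$, which has irregularity $g\geq 1$. This excludes $\dim R \leq 1$ and completes the classification.

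The main obstacle is the identification of the invariant center $\Gamma$ in the first case: the orbit-dimension argument, combined with the product structure of $\Autzero(S'\times \PP^n)$, pins $\Gamma$ down to a single fibre $\{q\}\times \PP^n$, after which \cite[Lemma 2.13]{BLZ} gives the product decomposition of $\alpha$. The remainder of the case analysis parallels the proof of Lemma \ref{SxPP^n}, with the exclusion of Mori fibrations from $S'$ depending only on the non-rationality of $S'$.
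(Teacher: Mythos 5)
Your overall strategy matches the paper's: split on whether $R=S'$, use the product structure $\Autzero(S\times\PP^n)\simeq\Autzero(S)\times\PGL_{n+1}(\kk)$ to pin the centre of the divisorial contraction to $\{q\}\times\PP^n$, invoke \cite[Lemma 2.13]{BLZ} and the $2$-ray game. Your orbit-dimension argument in the case $R=S'$ is a clean and valid variant of the paper's, and your exclusion of $\dim R\leq 1$ via non-rationality/minimality of $S'$ works, where the paper instead just counts: $S'$ non-minimal and birational to a ruled surface over a curve of positive genus forces $\rho(S')\geq 3$, so $\rho(R)=\rho(S')-1\geq 2$. (Minor slip: in the subcase where $R$ is a curve of genus $\geq 1$, the surface $S'$ is minimal but not rational, so your closing sentence ``in every such subcase $S'$ would be rational'' is not literally true; the contradiction there is with non-minimality, which you did state.)

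There is, however, one genuine gap in the case $\dim R=2$: you assert that $S'\to R$ is ``a birational morphism of smooth projective surfaces,'' but the smoothness of $R$ is not automatic. The morphisms in a $G$-Sarkisov diagram are only required to be outputs of an MMP on a $\mathbb{Q}$-factorial \emph{klt} pair $(Z,\Phi)$, and a $(K_{S'}+\Phi)$-negative extremal birational contraction from a smooth surface can contract, say, a $(-2)$-curve to a quotient singularity for a suitable klt boundary $\Phi$. Without smoothness of $R=T$ you cannot conclude that $\kappa$ contracts a $(-1)$-curve. The paper closes this by invoking property (2) of Definition \ref{Sarkisov link}: the maximal-dimensional variety $T\times\PP^n$ must have terminal singularities, whose singular locus has codimension at least $3$; if $q\in T$ were singular, then $\{q\}\times\PP^n$ would be a codimension-$2$ singular locus in $T\times\PP^n$, a contradiction. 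Hence $T$ is smooth and $\kappa$ is the contraction of a $(-1)$-curve. You should add this step.
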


\begin{proof}
	We again distinguish between two cases depending on the base $R$ of the Sarkisov diagram:
	if $R = S'$ then the first step of the link is an $\Autzero(S\times \PP^n)$-equivariant divisorial contraction $\alpha\colon Y \to S' \times \PP^n$.	
	We follow the same strategy of the proof of Lemma \ref{SxPP^n}:
	first by \cite[Corollary 4.2.7]{BSU}, $(\phi\times id_{\PP^n})\Autzero(S\times \PP^n)(\phi\times id_{\PP^n})^{-1}=\phi \Autzero(S)\phi^{-1} \times \PGL_{n+1}(\kk)$.
	This again implies that $\alpha$ has to be an extraction with center of the form $\{q\} \times \PP^n$, where $q$ is a point fixed by the action of $\phi\Autzero(S)\phi^{-1}$ on $S'$.
	Since the center is of codimension $2$, again using \cite[Lemma 2.13]{BLZ}, we conclude that 
	\[
	a = \eta \times id_{\PP^n} \colon T \times \PP^n \to S'\times \PP^n,
	\]
	where $\eta\colon T \to S'$ is the blow-up of $q$.	
	By Remark \ref{rank 2 fibrations}, the diagram is the one given in the statement.
	
	If $R \neq S'$,	we have a morphism $S' \to R$ of relative Picard rank $1$. 
	Since $S'$ is not minimal, its Picard rank is greater or equal to $3$ which already implies that $R=T$ is a surface.
	Again, using Remark \ref{rank 2 fibrations} we may conclude that the diagram is the one proposed in the statement.
	Moreover, by property \ref{prop:singularities} of Definition \ref{Sarkisov link}, $T\times \PP^n$ has to have terminal singularities.
	Thus the singular locus of $T \times \PP^n$ has codimension at least $3$ (see \cite[Corollary 5.18]{KM98}). If $q\in T$ is singular, then $\{q\}\times \PP^n$ is singular and has codimension $2$ in $T\times \PP^n$.
	This implies that $T$ is smooth and consequently, $S' \to T$ is the contraction of a $(-1)$-curve.
\end{proof}

We prove below the higher dimensional analog of Proposition \ref{dim2}.

\begin{theorem}\label{higherdim}
	Let $n\geq 1$. Let $C$ be a curve of genus $g\geq 1$, let $S$ be a decomposable $\PP^1$-bundle over $C$ such that $\seg(S)< -(1+\deg(K_C))$. Then $\Autzero(S\times \PP^n)$ is not contained in a maximal connected algebraic subgroup of $\Bir(S\times \PP^{n})$.
\end{theorem}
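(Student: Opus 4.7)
The strategy parallels the proof of Proposition \ref{dim2}, with the $G$-Sarkisov program replacing elementary transformations of surfaces. Suppose for contradiction that $\Autzero(S \times \PP^n) \subseteq G$, where $G$ is a maximal connected algebraic subgroup of $\Bir(S \times \PP^n)$. By Weil's regularization theorem, together with Brion's result on normalization and a $G$-equivariant resolution of singularities (exactly as in the proof of Proposition \ref{dim2}), $G$ acts regularly on a smooth projective variety $Y$ birational to $S \times \PP^n$. Running a $G$-equivariant MMP on $(Y,0)$, whose output is necessarily a Mori fibre space since $S \times \PP^n$ is uniruled, yields a $G$-MFS $\pi' \colon X' \to B'$ with $G \subseteq \Autzero(X')$. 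The induced birational map $\psi \colon S \times \PP^n \dashrightarrow X'$ is $\Autzero(S \times \PP^n)$-equivariant and we view it as a map between two $\Autzero(S \times \PP^n)$-MFS by taking the natural projection $S \times \PP^n \to C \times \PP^n$ on the source.

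By \cite[Theorem 1.2]{Floris} applied with the group $\Autzero(S \times \PP^n)$, the map $\psi$ decomposes as a finite chain of $\Autzero(S \times \PP^n)$-Sarkisov links. I would prove by induction on the length of this chain that every intermediate Mori fibre space is of the form $S_i \times \PP^n \to B_i$ with $B_i \in \{S_i, C \times \PP^n\}$, where $S_i$ is a smooth projective surface admitting an $\Autzero(S)$-equivariant birational map $S \dashrightarrow S_i$. Since every smooth projective surface birational to $S$ is ruled (hence either a $\PP^1$-bundle over $C$ or non-minimal), the inductive step is handled by Lemma \ref{SxPP^n} in the former case and by Lemma \ref{S'xPP^n} in the latter; both show that each link has the form $f_i \times id_{\PP^n}$ for an elementary transformation, blowup, or blowdown $f_i$ of surfaces. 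Consequently $X' = S' \times \PP^n$ for some smooth projective surface $S'$ birational to $S$ via an $\Autzero(S)$-equivariant map.

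Since $G$ is connected and acts regularly on $S' \times \PP^n$, Blanchard's lemma gives a regular $G$-action on $S'$. Contracting the $(-1)$-curves of $S'$ $G$-equivariantly produces a $\PP^1$-bundle $\tau_0 \colon S_0 \to C$ with a $G$-equivariant morphism $S' \to S_0$, and the composite $S \dashrightarrow S_0$ is $\Autzero(S)$-equivariant. Lemma \ref{lemma2dim2} then gives $\seg(S_0) \leq \seg(S) < -(1+\deg(K_C))$, which also implies that $S_0$ is decomposable by Remark \ref{remarkruled} \ref{remarkruled.3}. Since $\Autzero(S_0 \times \PP^n) = \Autzero(S_0) \times \PGL_{n+1}(\kk)$ is a connected algebraic subgroup of $\Bir(S \times \PP^n)$ containing $G$, the maximality of $G$ forces $G = \Autzero(S_0) \times \PGL_{n+1}(\kk)$.

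To conclude, Lemma \ref{lemma2dim2} applied to $S_0$ produces an elementary transformation $\epsilon \colon S_0 \dashrightarrow S_1$ centered on its minimal section with $\epsilon \Autzero(S_0) \epsilon^{-1} \subsetneq \Autzero(S_1)$. Conjugation by $\epsilon \times id_{\PP^n}$ yields
\[
(\epsilon \times id_{\PP^n}) \, G \, (\epsilon \times id_{\PP^n})^{-1} \subsetneq \Autzero(S_1) \times \PGL_{n+1}(\kk) = \Autzero(S_1 \times \PP^n),
\]
and the right-hand side is a connected algebraic subgroup of $\Bir(S \times \PP^n)$, contradicting the maximality of $G$. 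The main obstacle is the induction of the second paragraph: one must verify that Lemmas \ref{SxPP^n} and \ref{S'xPP^n} together enumerate every $\Autzero(S \times \PP^n)$-Sarkisov link whose source is an intermediate product Mori fibre space, so that the shape $S_i \times \PP^n$ is preserved all along the Sarkisov decomposition.
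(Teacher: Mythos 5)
Your proposal is correct and follows essentially the same route as the paper: regularize the maximal group $G$ on a smooth projective model, run a $G$-equivariant MMP, decompose the resulting map via Floris's $G$-Sarkisov program, and use Lemmas \ref{SxPP^n} and \ref{S'xPP^n} inductively to show every intermediate Mori fibre space keeps the product shape $S_i \times \PP^n$, so that $G \simeq \Autzero(S_0)\times\PGL_{n+1}(\kk)$ for a $\PP^1$-bundle $S_0$ with $\seg(S_0)\leq\seg(S)$, after which an elementary transformation centered on the minimal section contradicts maximality. The inductive step you flag as the main obstacle is exactly what the two lemmas are designed to close (together with the observation that any minimal surface birational to $S$ is a $\PP^1$-bundle over $C$), and the paper's own proof invokes them in precisely this way.
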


\begin{proof}
	Assume that $\Autzero(S\times \PP^n)$ is contained in a maximal connected algebraic subgroup $G\subset \Bir(S\times \PP^n)$. By \cite[Corollary 3]{Brionnormal}, there exists a normal and projective variety $Y$, $G$-birationally equivalent to $S\times \PP^n$, and on which $G$ acts regularly. Then we use an equivariant resolution of singularities (see \cite[Thm. 3.36, Prop. 3.9.1]{Kollar}) to furthermore assume that $Y$ is smooth. Running an MMP, which is $G$-equivariant by \cite[Lemma 2.5]{Floris}, we get an $\Autzero(S\times \PP^n)$-equivariant birational map $\chi\colon S\times \PP^n \dashrightarrow Y$ such that $G\simeq \Autzero(Y)$ and $Y\to B$ is a Mori fibre space.
	By \cite[Theorem 1.2]{Floris}, $\chi$ decomposes as a product of $\Autzero(S\times \PP^n)$-equivariant Sarkisov links. 
	By Lemmas \ref{SxPP^n} and \ref{S'xPP^n}, it follows that $Y= T \times \PP^n$ for some surface $T$ and $\chi$ is of the form $\psi \times id_{\PP^n}$, where $\psi\colon S \rmap T$ is an $\Autzero(S)$-equivariant birational map.
	Up to possibly performing an extra link of Type IV (namely the RHS link in Lemma \ref{SxPP^n} \ref{3.5.1}), we may assume that $B = T$ and $\theta$ is given by the projection to the first factor.	
	Contracting successively all $(-1)$-curves in $T$ yields an $\Autzero(S\times \PP^n)$-equivariant birational map $\phi\times id_{\PP^n}\colon S\times \PP^n \dashrightarrow S'\times \PP^n$ (by Blanchard's lemma, see Proposition \ref{blanchard}), where $\phi$ is $\Autzero(S)$-equivariant and $S'$ is a ruled surface. Two cases arise: either $\phi$ is an isomorphism and $\seg(S)=\seg(S')$, or $\phi$ is not an isomorphism and $\seg(S')<\seg(S)$ by Lemma \ref{lemma2dim2}. In both cases, $\seg(S')\leq \seg(S)$ and since $G$ is maximal, $G$ is isomorphic to $\Autzero(S'\times \PP^n) \simeq  \Autzero(S')\times \PGL_{n+1}(\kk)$ (\cite[Corollary 4.2.7]{BSU}). Let $\phi'\colon S'\dashrightarrow S''$ be an elementary transformation of $S'$ centered at a point on the minimal section. Then $\phi' \Autzero(S')\phi'^{-1} \subsetneq \Autzero(S'')$ by Lemma \ref{lemmadim2}. Thus $(\phi'\times id_{\PP^n}) \Autzero(S'\times \PP^n) (\phi'\times id_{\PP^n})^{-1} \subsetneq \Autzero(S''\times \PP^n)$, which contradicts the maximality of $G$ as connected algebraic subgroup of $\Bir(S\times \PP^{n})$.
\end{proof}

\begin{proof}[\bf{Proof of Theorem A}]
	Let $C$ be a curve of positive genus and $S \to C$ be a ruled surface. As $S$ is birational to $C\times \PP^1$, we get for all $n\geq 1$ that $\Bir(C\times \PP^n)\simeq \Bir(S\times \PP^{n-1})$. We conclude with Proposition \ref{dim2}  for $n=1$ and Theorem \ref{higherdim} for $n\geq 2$.
\end{proof}

\bibliographystyle{alpha} 
\bibliography{bib} 	


\end{document}